\newtheorem{theorem}{Theorem}[section]
\newtheorem{proposition}[theorem]{Proposition}
\theoremstyle{definition}
\begin{document} 

\title{Shapes of Polynomial Julia Sets, Revisited }
\author{Kathryn A. Lindsey}
\address{Department of Mathematics\\
         University of Chicago\\
         Chicago, IL 60637 \\ 
         U.S.A.}        
\email{klindsey@math.uchicago.edu}

\begin{abstract}
Any finite union of disjoint, mutually exterior Jordan curves in the complex plane can be approximated arbitrarily well in the Hausdorff topology by polynomial Julia sets.  Furthermore, the proof is constructive.  
\end{abstract}

\maketitle

\section{Introduction}

It was shown in \cite{LindseyShapes} that any finite union of disjoint mutually exterior Jordan curves can be approximated arbitrarily well in the Hausdorff topology by Julia sets of \emph{rational maps} (and any single Jordan curve can be approximated by polynomial Julia sets).  Here, we give a constructive proof that any such finite union of Jordan curves can be approximated by Julia sets of \emph{polynomials}.

\begin{theorem} \label{t:main}
Let $E$ be any finite union of disjoint Jordan domains in $\mathbb{C}$.  For any $\epsilon >0$, there exists a polynomial $P$ such that $$d\left(E, \mathcal{K}(P)\right) < \epsilon, \quad
 d\left(\partial E, \mathcal{J}(P)\right)< \epsilon, \quad \textrm{and} \quad
d(\hat{\mathbb{C}} \setminus E, \hat{\mathbb{C} } \setminus \mathcal{K}(P))< \epsilon.$$
\end{theorem}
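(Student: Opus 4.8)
\emph{Proposal.} The plan is to realize each configuration as a small perturbation of an explicit ``model'' map whose non-escaping set is exactly $\overline{E}$, and then recognize the approximating polynomial as a polynomial-like map close to that model. First I would reduce to the case that every component of $E$ is bounded by a real-analytic Jordan curve and that $E$ lies in a fixed disk: Hausdorff approximation is transitive and all three quantities in the statement are stable, so it suffices to approximate a sequence of analytically bounded $E^{(m)}\to E$. Consider first a single component, so that $\Omega=\hat{\mathbb{C}}\setminus\overline{E}$ is simply connected, and let $\psi\colon\{|w|>1\}\to\Omega$ be the Riemann map with $\psi(\infty)=\infty$, $\psi'(\infty)>0$; analyticity of $\partial E$ lets $\psi$ extend conformally across $\{|w|=1\}$. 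With $m_d(w)=w^d$, the model map $g=\psi\circ m_d\circ\psi^{-1}$ is a degree-$d$ proper holomorphic self-map of $\Omega$ on which every orbit escapes to $\infty$, so its non-escaping set is exactly $\overline{E}$ for \emph{every} $d$. Expanding $\psi(w)=c_{-1}w+c_0+c_1w^{-1}+\cdots$ and $\psi^{-1}(z)=c_{-1}^{-1}z+\cdots$ at infinity, the Laurent series of $g$ at $\infty$ has top power $z^d$; I would let $P_d$ be its nonnegative part, a polynomial of degree $d$, and set $R_d=g-P_d=O(1/z)$.

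The heart of the single-component argument is to control $R_d$ and convert it into a dynamical statement. Using analyticity of $\partial E$, the Laurent coefficients of $\psi^{-1}$ decay geometrically (a direct check on a Joukowski-type example shows the negative-power part of the series for $g$ has coefficients of size $O(\rho^{d})$ with $\rho<1$), which forces $R_d$ to be exponentially small in $d$ on a fixed annular neighborhood $A\subset\Omega$ of $\partial E$ bounded by two equipotentials $\{g_E=t_1\}$, $\{g_E=t_0\}$ with $0<t_1<t_0$. On $A$ the model $g$ strictly multiplies the Green's function $g_E$ by the factor $d$, so $g$ is uniformly expanding there and $g^{-1}(\{g_E<t_0\})\Subset\{g_E<t_0\}$. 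For $d$ large, $P_d$ inherits this: restricting $P_d$ to $U'=P_d^{-1}(U)$ with $U=\{g_E<t_0\}$ exhibits $P_d\colon U'\to U$ as a degree-$d$ polynomial-like map $O(\|R_d\|_A)$-close to the model $g\colon g^{-1}(U)\to U$, whose filled Julia set is $\overline{E}$. Hence the filled Julia set of $P_d$ near $U$ is within the same error of $\overline{E}$; this simultaneously yields $d(E,\mathcal{K}(P_d))<\epsilon$, $d(\partial E,\mathcal{J}(P_d))<\epsilon$, and $d(\hat{\mathbb{C}}\setminus E,\hat{\mathbb{C}}\setminus\mathcal{K}(P_d))<\epsilon$, and since $\psi$ and its truncations are computable, the construction is constructive.

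For several components the geometry changes: $\Omega$ is $n$-connected, there is no single Riemann map, and correspondingly there is no degree-$d$ holomorphic self-cover $g$ of $\Omega$ with $g_E\circ g=d\,g_E$ unless the harmonic measures $\omega_i=\omega(\infty,E_i)$ are rational with common denominator $d$ (the periods of the harmonic conjugate of $g_E$ around the holes must land in $\tfrac{2\pi}{d}\mathbb{Z}$). The plan is therefore: first perturb $E$ within $\epsilon/2$, adjusting the conformal sizes of the components while keeping $\partial E$ analytic, so that $d\,\omega_i\in\mathbb{Z}$ for a common $d$; then build from $g_E$ and its conjugate a genuine degree-$d$ branched self-cover $g$ of $\Omega$ with $g_E\circ g=d\,g_E$, whose equipotentials are connected above the top critical level of $g_E$ and split into $n$ families around the $E_i$ below it. Granting this model $g$ (again with non-escaping set exactly $\overline{E}$), I would truncate it to a polynomial $P$ as before and transfer the expansion estimate, now recognizing $P$ as a polynomial-like map over the $n$-component region $U=\{g_E<t_0\}$.

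The step I expect to be the main obstacle is this last transfer in the multiply connected case. The truncation error must be controlled near the saddle points of $g_E$ at the critical equipotential, where the components of the non-escaping set are joined by thin necks; the error $\|R_d\|$ there must be kept smaller than both the separation of the $E_i$ and the width of these necks, so that $\mathcal{K}(P)$ genuinely splits into $n$ blobs close to $E_1,\dots,E_n$ rather than merging the components or shedding spurious pieces of $\mathcal{K}(P)$ far from $E$. Establishing the geometric decay of $R_d$ uniformly across these saddle regions, together with the harmonic-measure rationalization, is the crux of the proof.
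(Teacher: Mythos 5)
Your single-component argument has a genuine gap at the step ``hence the filled Julia set of $P_d$ near $U$ is within the same error of $\overline{E}$.'' The model $g=\psi\circ m_d\circ \psi^{-1}$ is defined only on $\Omega$ (plus a thin collar across the analytic boundary); it has no dynamics on $E$ at all. So $g\colon g^{-1}(U)\to U$ with $U=\{g_E<t_0\}$ is not a polynomial-like map over a disk containing $\overline{E}$: the set $g^{-1}(U)$ is the annular region $\{z\in\Omega : g_E(z)<t_0/d\}$, and the non-escaping set of this exterior map is $\partial E$, not $\overline{E}$ --- the ``filling'' has no dynamical meaning for the model. Correspondingly, your truncation bound $\|R_d\|=O(q^d)$ (which is correct on $\{g_E\ge t_1\}$; indeed $P_d=c_{-1}\Phi_d+c_0$ where $\Phi_d$ is the $d$-th Faber polynomial of $E$, and geometric decay for analytic boundary is classical) controls $P_d$ only outside $E$. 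That yields the upper bound: every $z$ with $g_E(z)\ge t_0$ escapes, so $\mathcal{K}(P_d)\subset\{g_E\le t_0\}$. But the theorem equally needs lower bounds --- $E\subset N_\epsilon(\mathcal{K}(P_d))$ and $\partial E\subset N_\epsilon(\mathcal{J}(P_d))$ --- i.e., that points well inside $E$ do \emph{not} escape, and nothing in your argument constrains $P_d$ on $E$: the $d-1$ critical points of $P_d$ lie there and their orbits are uncontrolled. Nor can this be finessed by perturbation generalities: $P\mapsto\mathcal{K}(P)$ is only \emph{upper} semicontinuous in the Hausdorff metric (parabolic implosion, e.g. $z^2+\tfrac14+\epsilon$, shows filled Julia sets can collapse under arbitrarily small perturbation), so ``$P_d$ is exponentially close to a model with non-escaping set $X$'' does not imply ``$\mathcal{K}(P_d)$ is Hausdorff-close to $X$,'' even for honest polynomial-like maps. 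Your multi-component plan inherits this gap and adds another: the existence of a degree-$d$ branched self-cover of the multiply connected $\Omega$ satisfying $g_E\circ g=d\,g_E$ (after rationalizing harmonic measures), and its truncatability with uniform error across the saddle levels, is left as a conjecture.

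What is missing is an interior trapping mechanism, and it is instructive to compare with the paper, which manufactures one explicitly. There $P_{n,\delta}(z)=zS_{n,\delta}(z)$ with $|S_{n,\delta}|\approx e^{n(g_E-\delta)}$: the slack $\delta>0$ makes $|S_{n,\delta}|$ exponentially small on a compact $E'\subset E$ containing $0$, so $P_{n,\delta}$ maps $E'$ into a tiny disk about the fixed point $0$, forcing $E'\subset\mathcal{K}(P_{n,\delta})$, while $|S_{n,\delta}|>M>1$ on a compact subset of the interior of $\Omega$ forces escape there; the three Hausdorff estimates then follow from this two-sided sandwich plus a thin-annulus argument, with no polynomial-like theory, no continuity of $\mathcal{K}$, and no distinction between one and several components. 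Your normalization has no such slack: $|P_d|\approx\mathrm{cap}(E)\,e^{d\,g_E}$ is of constant order $\mathrm{cap}(E)$ on $\partial E$, so you have pinned the dynamics to the neutral level $g_E=0$. (In your ellipse test case the conclusion does hold, but only because of special structure: one can check that all critical values of the Chebyshev-like truncation lie near $0$, creating exactly such a trap; nothing in your argument produces this in general.) To repair your route you would need to prove, for instance, that all critical values of $P_d$ in $U$ have bounded orbits, or otherwise build an attracting region inside $E$ --- which is precisely the extra ingredient the paper's factor $z\,e^{n(\gamma-\delta)}$ provides for free.
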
 

\noindent Here, $\mathcal{K}(P)$ denotes the filled Julia set for $P$ (i.e. $\mathcal{K}(P) = \{z \in \mathbb{C} : P^m(z) \nrightarrow \infty \textrm{ as } m \rightarrow \infty\}$), $\mathcal{J}(P) = \partial \mathcal{K}(P)$ is the Julia set for $P$, and $d$ is the Hausdorff metric. 

\medskip

Approximation by Julia sets or other dynamically significant sets has been the focus of several recent works (e.g. \cite{BishopTrees, BishopPilgrim, CalegariDipoles, Ivrii, Ivrii2, LindseyShapes}).  A feature of the construction presented in this article and \cite{LindseyShapes} that is absent from \cite{BishopPilgrim, CalegariDipoles, Ivrii, Ivrii2} is that the \emph{filled Julia set}, as well as the Julia set, can be made to approximate some desired shape.  Closeness in the Hausdorff metric of a Julia set to a collection of Jordan curves does not imply closeness of the associated filled Julia set to the region bounded by those curves.  
 Approximating shapes by filled Julia sets has applications to computer graphics (\cite{Kim}).

\subsection*{Acknowledgments}
Kathryn Lindsey thanks Amie Wilkinson for her guidance during the development of this project.  
She also thanks Greg Lawler for several helpful conversations about harmonic measure.  
The author was supported by a NSF Mathematical Sciences Postdoctoral Research Fellowship.

\section{Polynomials approximating logarithmic potential} \label{s:polydef}

 Without loss of generality, we may assume that the Jordan domains Theorem \ref{t:main} are bounded by smooth Jordan curves.  Let $E$ be the union of finitely many disjoint Jordan domains in $\mathbb{C}$ bounded by smooth Jordan curves.  Write $\Omega = \hat{\mathbb{C}} \setminus E$ and $\Gamma = \partial E$.  

We will define a family of polynomials $S_{n,\delta}$ indexed by $n \in \mathbb{N}$ and $\delta > 0$.   The polynomial $S_{n,\delta}$ will have $n$ roots in $\Gamma$, distributed according to harmonic measure relative to $\infty$, and $|S_{n,\delta}|$ is an approximation of  the logarithmic potential of this measure, scaled by a real constant governed by $\delta$. 

 For background on harmonic measure and potential theory, see, for example, \cite{GarnettMarshall}.  Harmonic measure relative to $\infty$, which we will denote by $\mu_{\infty}$, is a Borel probability measure supported on $\Gamma$ that is (recall we assume $\Gamma$ consists of smooth curves) absolutely continuous with respect to arclength.  The density of harmonic measure relative to a point $w \in \Omega$ with respect to arclength at a point $\zeta \in \Gamma$ is given by the Poisson kernel, $P_w(\zeta) = \frac{-1}{2\pi} \frac{\partial g(w,\zeta)}{\partial n_{\zeta}}$, where $g(w,\cdot)$ is the Green's function with a pole at $w$ and $n_{\zeta}$ is the unit outer normal to $\Gamma$ at $\zeta$.  For $w \in \Omega$, the Green's function $g(w,\cdot):\hat{\mathbb{C}} \rightarrow \mathbb{R}$ is the unique function such that $g(w,\cdot) = 0$ on $E \cup \Gamma$, $g(w,\cdot) > 0$ on the interior of $\Omega$, $\zeta \mapsto g(w,\zeta)$ is harmonic on $\Omega \setminus \{w\}$, and $\zeta \mapsto g(w,\zeta) - \log \frac{1}{|w - \zeta|}$ is harmonic at $w$.  Harmonic measure for more general sets may be defined using probabilistic techniques, viewing harmonic measure as the hitting measure of Brownian motion, as in, for example, \cite{Lawlerbook}.  

The \emph{logarithmic potential} of the harmonic measure $\mu_{\infty}$ is the function
$$U_{\mu_{\infty}}(z) = \int \log \frac{1}{| \zeta - z|}d\mu_{\infty}(\zeta).$$
In our situation (i.e. $\Gamma$ consists of finitely many smooth Jordan curves and $\Omega$ is connected), $U_{\mu_{\infty}}$ is absolutely convergent and continuous on $\mathbb{C}$ and

\begin{equation} \label{eq:potentialbehavior}
U_{\mu_{\infty}}(z) = \gamma - g(\infty,z) \quad \textrm{for all} \quad z \in \mathbb{C},
 \end{equation}
\noindent where $\gamma$ is a real number associated to $E$ (\emph{Robin's constant} for $E$).  

\begin{theorem} \label{t:polys}
Let $E$ be a finite union of smooth Jordan domains in $\mathbb{C}$ and fix $\delta > 0$.  For each $n \in \mathbb{N}$, define 
 \begin{equation}
 S_{n, \delta}(z)  = e^{n(\gamma - \delta)} \prod_{i=1}^n (\zeta_i^n - z),
 \end{equation}
where $\{\zeta_1^n,\dots,\zeta_n^n\}$ is any set of $n$ points in $\partial E$ that are approximately equidistributed with respect to $\mu_{\infty}$.  
Then $$\lim_{n \rightarrow \infty} |S_{n,\delta}(z)| = 0 \hspace{1cm} \textrm{for all }z \textrm{ such that }g(\infty,z) < \delta, $$
$$\lim_{n \rightarrow \infty} |S_{n,\delta}(z)| = \infty \hspace{1cm} \textrm{for all }z \textrm{ such that }g(\infty,z) > \delta. $$
\end{theorem}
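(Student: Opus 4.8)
The plan is to reduce the whole statement to the asymptotics of $\frac1n\log|S_{n,\delta}(z)|$. Taking logarithms gives
$$\frac1n\log|S_{n,\delta}(z)| = (\gamma-\delta) + \frac1n\sum_{i=1}^n \log|\zeta_i^n - z| = (\gamma-\delta) + \int \log|\zeta - z|\,d\mu_n(\zeta),$$
where $\mu_n$ is the probability measure assigning mass $1/n$ to each of the points $\zeta_1^n,\dots,\zeta_n^n$. The hypothesis that these points are approximately equidistributed with respect to $\mu_\infty$ I read as the weak convergence $\mu_n \to \mu_\infty$. Granting this, I expect $\int\log|\zeta-z|\,d\mu_n \to \int\log|\zeta-z|\,d\mu_\infty = -U_{\mu_\infty}(z)$, and then \eqref{eq:potentialbehavior} yields $\frac1n\log|S_{n,\delta}(z)| \to (\gamma-\delta) - U_{\mu_\infty}(z) = g(\infty,z)-\delta$. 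Once this limit is in hand the two conclusions are immediate: where $g(\infty,z)>\delta$ the limit is a positive constant, forcing $\log|S_{n,\delta}(z)| \to +\infty$ and hence $|S_{n,\delta}(z)|\to\infty$; where $g(\infty,z)<\delta$ the limit is negative, forcing $\log|S_{n,\delta}(z)| \to -\infty$ and hence $|S_{n,\delta}(z)|\to 0$.

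The only delicate point is the convergence $\int\log|\zeta-z|\,d\mu_n \to \int\log|\zeta-z|\,d\mu_\infty$, since $\zeta\mapsto\log|\zeta-z|$ is unbounded near $\zeta=z$ while weak convergence controls only integrals of bounded continuous functions. I would split on the location of $z$. Whenever $g(\infty,z)>0$ --- this covers the entire ``$\to\infty$'' regime together with the subregion $0<g(\infty,z)<\delta$ of the ``$\to 0$'' regime --- the point $z$ lies in the interior of $\Omega$ and in particular at positive distance from the compact set $\Gamma$ that supports every $\mu_n$. There $\zeta\mapsto\log|\zeta-z|$ is continuous and bounded on $\Gamma$, so weak convergence applies verbatim and the limit identity holds. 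The same remark handles $z$ interior to $E$, where $g(\infty,z)=0<\delta$ and still $z\notin\Gamma$.

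The genuinely problematic case is $z\in\Gamma$, where $g(\infty,z)=0<\delta$, the points $\zeta_i^n$ may accumulate at $z$, and the integrand has a true singularity inside the support. Here I do not need the exact limit, only an upper bound strong enough to force $|S_{n,\delta}(z)|\to 0$. The device will be truncation: for each $M>0$ the function $\zeta\mapsto\max(\log|\zeta-z|,-M)$ is bounded, continuous, and dominates $\log|\zeta-z|$, so
$$\limsup_{n\to\infty}\int\log|\zeta-z|\,d\mu_n \;\le\; \lim_{n\to\infty}\int\max(\log|\zeta-z|,-M)\,d\mu_n \;=\; \int\max(\log|\zeta-z|,-M)\,d\mu_\infty.$$
Letting $M\to\infty$ and invoking monotone convergence (using that $U_{\mu_\infty}$ converges absolutely, so $\int\log|\zeta-z|\,d\mu_\infty = -U_{\mu_\infty}(z)$ is finite) gives $\limsup_n\int\log|\zeta-z|\,d\mu_n \le -U_{\mu_\infty}(z)$. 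Substituting into the first display produces $\limsup_n\frac1n\log|S_{n,\delta}(z)| \le g(\infty,z)-\delta = -\delta < 0$, which is exactly what is needed to conclude $|S_{n,\delta}(z)|\to 0$. I expect this truncation-and-upper-semicontinuity step to be the main obstacle; everywhere else the argument is a direct application of weak convergence and the identity \eqref{eq:potentialbehavior}.
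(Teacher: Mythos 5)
Your proposal is correct, and its core is the same as the paper's: take logarithms, view $\frac1n\sum_{i=1}^n \log|\zeta_i^n - z|$ as an integral against the empirical measures, pass to the limit $-U_{\mu_\infty}(z)$ using equidistribution, and convert via equation (\ref{eq:potentialbehavior}) into the exponential dichotomy at rate $g(\infty,z)-\delta$. (Your reading of ``approximately equidistributed'' as weak-$*$ convergence of the empirical measures matches how the paper itself uses the hypothesis, namely as convergence of Riemann sums.) The genuine difference is your treatment of $z \in \Gamma$. The paper's proof establishes the Riemann-sum convergence only ``for $z \not\in \Gamma$, $z \neq \infty$,'' where the integrand is continuous on the support, and never returns to the singular case; yet the theorem's first conclusion does include $z \in \Gamma$, since $g(\infty,\cdot) \equiv 0$ there. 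Your truncation argument --- dominating $\log|\zeta - z|$ by the bounded continuous function $\max(\log|\zeta - z|, -M)$, applying weak convergence, and letting $M \to \infty$ by monotone convergence (legitimate because the truncations are uniformly bounded above on the compact $\Gamma$ and $-U_{\mu_\infty}(z)$ is finite) --- yields $\limsup_{n} \frac1n \log|S_{n,\delta}(z)| \le g(\infty,z) - \delta = -\delta < 0$, which is exactly the one-sided bound needed to force $|S_{n,\delta}(z)| \to 0$. This is the standard upper-semicontinuity (``principle of descent'') device for logarithmic potentials, and it closes a case the paper's proof glosses over. The omission is harmless for the paper's purposes, since Proposition \ref{p:topological} only evaluates $S_{n,\delta}$ on compact sets disjoint from $\Gamma$, but your version proves Theorem \ref{t:polys} as literally stated.
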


\noindent We must specify the meaning of ``approximately equidistributed" in Theorem \ref{t:polys}.  If $\Gamma$ consists of more than one Jordan curve, it will typically be impossible to divide $\Gamma$ into $n$ connected curves which each are of precisely $\mu_\infty$-measure $1/n$.  However, by taking $n$ to be sufficiently large, we can choose the set $\{\zeta_1^n,\dots,\zeta_n^n\}$ so that it is arbitrarily close to being equidistributed along $\Gamma$ with respect to $\mu$.  Thus, we will say that $\{\zeta_1^n,\dots,\zeta_n^n\}$ is \emph{approximately equidistributed} with respect to $\mu_\infty$ if for any $\epsilon > 0$, there exists $N \in \mathbb{N}$ such that for $n\geq N$, cutting $\Gamma$ at the points $\{\zeta_1^n,\dots,\zeta_n^n\}$ yields $n$ connected curves each of $\mu$-measure between $\frac{1}{n}-\epsilon$ and $\frac{1}{n} + \epsilon$.  

\begin{proof}
Fix $\delta>0$.  We have $$-U_{\mu_\infty}(z)  = \int_{\partial \Omega} \log |\zeta - z| d\mu_\infty(\zeta).$$ For $z \not \in \Gamma$, $z \not = \infty$, the function $\log |\zeta - z |$ is finite and continuous on $\Gamma \owns \zeta$.  Hence  $\int_{\partial \Omega} \log |\zeta - z| d\mu_\infty(\zeta)$ may be computed as a  Riemann integral with density: 

\begin{equation} \label{eq:initial}
 -U_{\mu_\infty} (z) = \lim_{n \rightarrow \infty} \frac{1}{n} \sum_{i=1}^n \log | \zeta_i^n - z|,
\end{equation} 
assuming the $\{\zeta_1^n,\dots,\zeta_n^n\}$ are equidistributed with respect to $\mu_{\infty}$.  Substituting $U_{\mu_\infty}(z) = \gamma - g(\infty,z)$ from equation (\ref{eq:potentialbehavior}) in equation (\ref{eq:initial}) gives
\begin{equation}
\lim_{n \rightarrow \infty} \frac{1}{n} \sum_{i=1}^n \log |\zeta_i^n - z| + \gamma = g(\infty,z).
\end{equation}
Thus, for any $\epsilon>0$, there exists $N \in \mathbb{N}$ such that $n \geq N$ implies 
$$ g(z,\infty) - \epsilon \leq \frac{1}{n} \sum_{i = 1}^n \log |\zeta_i^n - z| + \gamma \leq g(z,\infty) + \epsilon,$$
or, equivalently, 
$$n \left(g(\infty,z)- \epsilon \right) \leq \log \left| \prod_{i=1}^n (\zeta_i^n-z) \right| + n \gamma \leq n \left(g(z,\infty) + \epsilon \right),$$ and hence 

\begin{equation} \label{eq:bounds} 
\exp \left( n ( g(z,\infty) - \epsilon - \delta) \right) \leq  
 \left| S_{n,\delta}(z) \right| \leq \exp \left(n(g(z,\infty) + \epsilon - \delta)\right) 
\end{equation}
for any $\delta >0$.

If $g(z,\infty) + \epsilon - \delta < 0$, then $\lim_{n \rightarrow \infty} |S_{n,\delta}(z) | =0$ 
by the right hand side of inequality (\ref{eq:bounds}).  Since $\epsilon>0$ was arbitrary, we have $\lim_{n \rightarrow \infty} |S_{n,\delta}(z)| = 0$ for all $z$ such that $g(\infty,z) < \delta$.  If $g(\infty,z) - \epsilon - \delta > 0,$ then $ \lim_{n \rightarrow \infty} |S_{n,\delta}(z)| = \infty$ by the left hand side of inequality  (\ref{eq:bounds}).  Since $\epsilon>0$ was arbitrary, we have $ \lim_{n \rightarrow \infty} |S_{n,\delta}(z)| = \infty$ for all $z$ such that $g(\infty,z) > \delta$.

\end{proof}


\section{Julia sets} \label{s:JuliaSets}

For each $n \in \mathbb{N}$ and $\delta > 0$, define the polynomial $P_{n,\delta}:\hat{\mathbb{C}} \rightarrow \hat{\mathbb{C}}$ by \begin{equation} \label{eq:Pdef}
P_{n,\delta}(z) = z S_{n,\delta}(z) = e^{n(\gamma - \delta)} z  \prod_{i=1}^n (\zeta_i^n - z)
\end{equation}
for $z \not = \infty$ and $P_{n,\delta}(\infty) = \infty$.  

\begin{proposition} \label{p:topological}
Assume $0 \in E$, and let $E^{\prime}$ be a compact subset of $E$ that contains $0$. Let $\Omega^{\prime}$ be a compact subset of $\textrm{int}(\Omega) \subset \hat{\mathbb{C}}$ that contains $\infty$.  Then there exists $D > 0$ such that for any $0 < \delta \leq D$, there exists a natural number $N(\delta)$ such that $n \geq N(\delta)$ implies $E^{\prime} \subset \mathcal{K}(P_{n,\delta})$ and $\Omega^{\prime} \subset \hat{\mathbb{C}} \setminus \mathcal{K}(P_{n,\delta})$
 \end{proposition}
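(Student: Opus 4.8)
The plan is to feed the dichotomy of Theorem \ref{t:polys} into a standard trapping/escape argument for the polynomial dynamics of $P_{n,\delta}$. Since $P_{n,\delta}$ has degree $n+1\ge 2$, its filled Julia set $\mathcal{K}(P_{n,\delta})$ is exactly the complement in $\hat{\mathbb{C}}$ of the basin $A_\infty$ of the superattracting fixed point $\infty$; equivalently, $\mathcal{K}(P_{n,\delta})$ is the set of points with bounded forward orbit. So I must show (a) every point of $\Omega'$ has orbit tending to $\infty$, and (b) every point of $E'$ has bounded orbit. Because $\Omega'$ is a compact subset of $\mathrm{int}(\Omega)$, the continuous, strictly positive function $g(\infty,\cdot)$ attains a minimum $m>0$ on $\Omega'$; I would set $D=m/2$, so that for every $\delta\in(0,D]$ one has $g(\infty,z)\ge m>\delta$ on $\Omega'$, while $g(\infty,z)=0<\delta$ on $E'\subset E$.

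The \textbf{main obstacle} is that Theorem \ref{t:polys} is only pointwise, whereas to obtain a single threshold $N(\delta)$ valid simultaneously on all of $\Omega'$ and $E'$ I need the convergence to be uniform. So the first real step is to prove that $\frac{1}{n}\log|S_{n,\delta}(z)|\to g(\infty,z)-\delta$ uniformly on every compact set $K\subset\mathbb{C}\setminus\Gamma$. Writing the left side as $(\gamma-\delta)+\int\log|\zeta-z|\,d\nu_n(\zeta)$ with $\nu_n=\frac1n\sum_i\delta_{\zeta_i^n}$, this reduces to the uniformity in $z\in K$ of the convergence $\int\log|\zeta-z|\,d\nu_n(\zeta)\to\int\log|\zeta-z|\,d\mu_\infty(\zeta)$. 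The approximate-equidistribution hypothesis is precisely the statement that $\nu_n\to\mu_\infty$ weakly; since $K$ is disjoint from the compact curve $\Gamma$, the family $\{\zeta\mapsto\log|\zeta-z|:z\in K\}$ is uniformly bounded and equicontinuous on $\Gamma$, and for such a family weak convergence of measures upgrades to convergence of the integrals that is uniform over $z\in K$. This equicontinuity upgrade, together with verifying that the arc condition really yields weak convergence of $\nu_n$, is the step I expect to require the most care.

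Granting uniform convergence, the escape statement (a) follows in two moves. First, a modulus estimate independent of $n$: with $M=\max_{\zeta\in\Gamma}|\zeta|$, for $|z|\ge 2M$ one has $|\zeta_i^n-z|\ge|z|/2$, so $|S_{n,\delta}(z)|\ge(e^{\gamma-\delta}|z|/2)^n$; hence for $R:=\max\{2M,\,4e^{D-\gamma}\}$ every $z$ with $|z|>R$ satisfies $|P_{n,\delta}(z)|\ge 2|z|$, and such points escape to $\infty$. Second, on the compact set $\Omega'\cap\{|z|\le R\}$, which is bounded away from $0$ (say $|z|\ge c_0>0$, since $0\in E$), the uniform lower bound $g(\infty,\cdot)-\delta\ge m-\delta>0$ forces $|S_{n,\delta}|\to\infty$ uniformly, so for $n$ large $|P_{n,\delta}(z)|=|z|\,|S_{n,\delta}(z)|>R$; one iterate thus pushes these points into the escape region, and they too tend to $\infty$. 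Therefore $\Omega'\subset A_\infty=\hat{\mathbb{C}}\setminus\mathcal{K}(P_{n,\delta})$ for all large $n$.

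For the trapping statement (b), fix $s$ with $0<s<\delta$ and set $\tilde E=\{z:g(\infty,z)\le s\}$; this is a compact neighborhood of $E$ contained in $\{g(\infty,\cdot)<\delta\}$, with $0\in\mathrm{int}(\tilde E)$. On $\tilde E$ the uniform upper bound $g(\infty,\cdot)-\delta\le s-\delta<0$ gives $|S_{n,\delta}|\to 0$ uniformly, whence $\sup_{z\in\tilde E}|P_{n,\delta}(z)|\le\big(\max_{\tilde E}|z|\big)\,\sup_{\tilde E}|S_{n,\delta}|\to 0$. Choosing $\rho>0$ with $\overline{B(0,\rho)}\subset\tilde E$, for all large $n$ we obtain $P_{n,\delta}(\tilde E)\subset B(0,\rho)\subset\tilde E$, so $\tilde E$ is a bounded forward-invariant set and hence $\tilde E\subset\mathcal{K}(P_{n,\delta})$; in particular $E'\subset\tilde E\subset\mathcal{K}(P_{n,\delta})$. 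Taking $N(\delta)$ to be the larger of the two thresholds produced in (a) and (b) completes the argument.
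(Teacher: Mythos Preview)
Your approach is the paper's approach: both trap $E'$ by showing $|S_{n,\delta}|$ is uniformly small on a compact set in $E$ containing a ball about $0$ (so one application of $P_{n,\delta}$ lands in that ball), and both force escape from $\Omega'$ by showing $|S_{n,\delta}|$ is uniformly large there. You are in fact more careful than the paper on the key point: the paper simply invokes inequality~(\ref{eq:bounds}) on all of $E'$ and $\Omega'$ as if the threshold $N$ were uniform in $z$, whereas you correctly flag this as the main obstacle and give a valid equicontinuity argument (on compacta disjoint from $\Gamma$, the kernels $\zeta\mapsto\log|\zeta-z|$ form a uniformly bounded equicontinuous family on $\Gamma$, so weak convergence $\nu_n\to\mu_\infty$ upgrades to convergence of the integrals uniform in $z$). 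Your explicit escape estimate for $|z|>R$, valid for all $n$ simultaneously, is a nice touch absent from the paper.

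There is one genuine slip in your trapping step. Your set $\tilde E=\{z:g(\infty,z)\le s\}$ contains $\Gamma$ (indeed $g(\infty,\cdot)\equiv 0$ on $E\cup\Gamma$), but your uniformity lemma explicitly requires $K\subset\mathbb{C}\setminus\Gamma$; on $\Gamma$ the kernels $\zeta\mapsto\log|\zeta-z|$ are neither bounded nor equicontinuous, so the upgrade does not apply, and your claim ``$|S_{n,\delta}|\to 0$ uniformly on $\tilde E$'' is not justified by what you proved. The fix is immediate and is exactly what the paper does: work inside $E$ rather than on a sublevel set of $g$. Enlarge $E'$ to $E'':=E'\cup\overline{B(0,\rho)}$ with $\overline{B(0,\rho)}\subset E$ (possible since $0\in E$ and $E$ is open); then $E''$ is a compact subset of $\mathbb{C}\setminus\Gamma$, your uniformity argument gives $\sup_{E''}|S_{n,\delta}|\to 0$, hence $\sup_{E''}|P_{n,\delta}|\le(\max_{E''}|z|)\sup_{E''}|S_{n,\delta}|<\rho$ for large $n$, and $P_{n,\delta}(E'')\subset B(0,\rho)\subset E''$ yields forward invariance. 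The sublevel set $\tilde E$ is unnecessary.
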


\begin{proof}
Fix $r>0$ small enough that $B_r(0) \subset E^{\prime}$.  Fix a real number $m > 0$ such that $$m < \frac{r}{\sup \{|z|:z \in E^{\prime}\}}.$$  Fix $R > 0$ large enough that $\hat{\mathbb{C}} \setminus B_R(0) \subset \Omega^{\prime}$.  Fix a real number $M > 1$ such that $$M > \frac{R}{\inf \{|z| : z \in \Omega^{\prime}\}}.$$

We now show that there exists $D > 0$ such that for any $0 < \delta \leq D$, there exists $N(\delta) \in \mathbb{N}$ such that $n \geq N(\delta)$ implies $|S_{n,\delta}(z)| < m$ for all $z \in E^{\prime}$ and $|S_{n,\delta}(z)| > M$ for all $z \in \Omega^{\prime}$.
Recall that $g(\infty,z) = 0$ for $z \in E \cup \Gamma$ and $g(\infty,z) > 0$ for $z$ in the interior of $\Omega$.  Since $\Omega^{\prime}$ is a compact subset of the interior of $\Omega$, there exists $D> 0$ such that $g(\infty,z) > D$ for all $z \in \Omega^{\prime}$.  Let 
$$\alpha = \inf_{z \in \Omega^{\prime}} \{g(\infty,z)-D\}.$$  Notice $\alpha > 0$ by compactness of $\Omega^{\prime}$. Using $\epsilon = \alpha/2$, by equation (\ref{eq:bounds}), there exists $N_0 \in \mathbb{N}$ such that $n \geq N_0$ and $0 <\delta \leq D$ imply 
$$(e^{\alpha/2})^n \leq |S_{n,\delta}(z)|$$ for all $z \in \Omega^{\prime}$.   The sets 
$$\{z \in \Omega^{\prime}: (e^{\alpha/2})^k > M \textrm{ for all integers }k \geq n \}, \quad n \in \mathbb{N},$$
 form a countable open cover of $\Omega^{\prime}$; by compactness, this cover admits a finite subcover.  Hence there exists $N_1 \in \mathbb{N}$ such that $n \geq N_1$ and $0 < \delta \leq D$ imply  $|S_{n,\delta}(z)| > M$ for all $z \in \Omega^{\prime}$.  
Now for any fixed $0 < \delta \leq D$, using $\epsilon = \delta/2$ in equation (\ref{eq:bounds}), there exists $N_2 \in \mathbb{N}$ such that $|S_{n,\delta}(z)| < (e^{-\delta/2})^n$ for all $z \in E^{\prime}$ and $n \geq N_2$.  Pick $N_2$ to be large enough that $(e^{-\delta/2})^{N_2} < m$.  Set $N(\delta) = \max\{N_1,N_2\}$.  

Now fix $0 < \delta \leq D$  and let $n \geq N(\delta)$.  For $z \in E^{\prime}$, 
$$|P_{n,\delta}(z)| = |z| \cdot |S_{n,\delta}(z)| \leq \sup \{|z|:z \in E^{\prime}\} \cdot m < r.$$  Hence $z \in E^{\prime}$ implies $P_{n,\delta}(z) \in E^{\prime}$, and thus $E^{\prime} \subset \mathcal{K}(P_{n,\delta})$.  
For $z \in \Omega^{\prime}$, 
$$|P_{n,\delta}(z)| = |z| \cdot |S_{n,\delta}(z)| \geq \inf \{|z| : z \in \Omega^{\prime}\} \cdot M > R.$$  Hence $z \in \Omega^\prime$ implies $P_{n,\delta}(z) \in \Omega^{\prime}$.  For $z \in \Omega^{\prime}$, we also have $|P_{n,\delta}(z)| > |z| M$, with $M > 1$, so $\lim_{k \rightarrow \infty} |P^k_{n,\delta}(z)| = \infty$.  Hence $\Omega^\prime \subset \hat{\mathbb{C}} \setminus \mathcal{K}(P_{n,\delta})$.
\end{proof}

\begin{proposition} \label{p:metricprop}
Let $E$ be a finite union of disjoint smooth Jordan domains in $\mathbb{C}$ that contains $0$ and let $\epsilon > 0$.  Then there exists $D>0$ such that for any $0 < \delta \leq D$ there exists $N(\delta) \in \mathbb{N}$ such that $n \geq N$ implies 
$$d\left(E,\mathcal{K}(P_{n,\delta})\right) < \epsilon, \quad d\left(\partial E, \mathcal{J}(P_{n,\delta})\right) < \epsilon, \textrm{ and } d\left(\hat{\mathbb{C}} \setminus E, \hat{\mathbb{C}} \setminus \mathcal{K}(P_{n,\delta})\right) < \epsilon.$$
\end{proposition}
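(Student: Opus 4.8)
The plan is to deduce all three Hausdorff estimates from the set-theoretic containment furnished by Proposition~\ref{p:topological}, by choosing the auxiliary sets $E'$ and $\Omega'$ to be inner collars that pinch $\Gamma$ from both sides to within $\epsilon$. Once the containment
$$E' \subset \mathcal K(P_{n,\delta}) \subset \hat{\mathbb C}\setminus\Omega'$$
is in hand, the approximation of $E$, of its complement, and of its boundary all reduce to elementary neighborhood bookkeeping.

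First I would fix a parameter $t\in(0,\epsilon)$ and set $E'=\{z\in\overline E:\textrm{dist}(z,\Gamma)\ge t\}$ and $\Omega'=\{z\in\overline\Omega:\textrm{dist}(z,\Gamma)\ge t\}$, all distances and neighborhoods taken in the spherical metric so that the point $\infty\in\Omega'$ is handled uniformly. Since $\Gamma$ is a finite union of \emph{smooth} Jordan curves, it has a tubular collar, and hence for all sufficiently small $t$ the sets $E'$ and $\Omega'$ are compact with $0\in E'\subset E$ and $\infty\in\Omega'\subset\textrm{int}(\Omega)$; moreover they enjoy the covering properties $\overline E\subset(E')_t$, $\overline\Omega\subset(\Omega')_t$, and $\hat{\mathbb C}\setminus(\textrm{int}(E')\cup\Omega')\subset\Gamma_t$, where $(\,\cdot\,)_s$ denotes the open $s$-neighborhood and $\overline\Omega=\hat{\mathbb C}\setminus E$. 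Establishing these inclusions is a routine consequence of the collar structure, and it is here that the smoothness hypothesis on $\Gamma$ is used.

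Next I would apply Proposition~\ref{p:topological} to this $E'$ and $\Omega'$, obtaining $D>0$ and, for each $0<\delta\le D$, a threshold $N(\delta)$ such that $n\ge N(\delta)$ forces the sandwich above. From the left inclusion together with $\overline E\subset(E')_t$ one gets $\overline E\subset\bigl(\mathcal K(P_{n,\delta})\bigr)_\epsilon$, and from the right inclusion together with the fact that every point outside $\Omega'$ lies within $t<\epsilon$ of $\overline E$ one gets $\mathcal K(P_{n,\delta})\subset\overline E_\epsilon$; these two inclusions give $d(E,\mathcal K(P_{n,\delta}))<\epsilon$. Complementing the sandwich yields $\Omega'\subset\hat{\mathbb C}\setminus\mathcal K(P_{n,\delta})\subset\hat{\mathbb C}\setminus E'$, and the symmetric argument, using $\overline\Omega\subset(\Omega')_t$ and the fact that $\hat{\mathbb C}\setminus E'$ is contained in $\overline\Omega_\epsilon$ (points missing $E'$ are either in $\Omega$ or lie within $t$ of $\Gamma$), gives the third estimate $d(\hat{\mathbb C}\setminus E,\hat{\mathbb C}\setminus\mathcal K(P_{n,\delta}))<\epsilon$.

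The estimate requiring the most care is $d(\partial E,\mathcal J(P_{n,\delta}))<\epsilon$, since the sandwich controls $\mathcal K$ but not directly its boundary $\mathcal J=\partial\mathcal K$. For the inclusion $\mathcal J\subset\Gamma_\epsilon$ I would observe that any $c\in\partial\mathcal K$ can lie neither in $\textrm{int}(E')$ (being a limit of points of $\hat{\mathbb C}\setminus\mathcal K$, which is disjoint from $E'$) nor in $\Omega'$ (being a point of $\mathcal K$, which is disjoint from $\Omega'$), so $c\in\hat{\mathbb C}\setminus(\textrm{int}(E')\cup\Omega')\subset\Gamma_t\subset\Gamma_\epsilon$. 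For the reverse inclusion $\Gamma\subset\mathcal J_\epsilon$, given $\zeta\in\Gamma$ the collar structure provides $a\in E'$ and $b\in\Omega'$ with $a,b\in B_\epsilon(\zeta)$; since $a\in\mathcal K$ and $b\notin\mathcal K$, any arc from $a$ to $b$ inside $B_\epsilon(\zeta)$ must meet $\partial\mathcal K$, exhibiting a point of $\mathcal J$ within $\epsilon$ of $\zeta$. The main obstacle is thus purely topological: forcing $\partial\mathcal K$ into the thin collar separating $E'$ from $\Omega'$ and invoking the crossing argument that an arc joining $\mathcal K$ to its complement must hit $\partial\mathcal K$; all of the analytic content has already been absorbed into Proposition~\ref{p:topological}.
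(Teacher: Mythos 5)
Your proposal is correct and follows essentially the same route as the paper: both invoke Proposition~\ref{p:topological} with compact sets $E'$, $\Omega'$ that pinch $\Gamma$ to within $\epsilon$ (the paper builds them from skinny annuli $A_i \supset \Gamma_i$, you from distance-function collars, a cosmetic difference), then conclude by neighborhood bookkeeping together with the same crossing argument that an arc from a point of $E' \subset \mathcal{K}$ to a point of $\Omega' \subset \hat{\mathbb{C}} \setminus \mathcal{K}$ inside $B_\epsilon(\zeta)$ must meet $\mathcal{J} = \partial \mathcal{K}$. If anything, your observation that a point of $\mathcal{J}$ can avoid $\mathrm{int}(E')$ but not necessarily $\partial E'$ is slightly more careful than the paper's assertion that $\mathcal{J} \subset A$.
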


\begin{proof}[Proof of Theorem \ref{t:main}]
Fix $\epsilon > 0$.  For each Jordan curve $\Gamma_i$ of $\Gamma = \partial E$, pick a skinny open annulus $A_i \supset \Gamma_i$; let $A = \bigcup A_i$, let $\Omega^{\prime}$ be the unbounded connected component of $\hat{\mathbb{C}} \setminus A$, and let $E^{\prime} = \hat{\mathbb{C}} \setminus (\Omega^{\prime} \cup A)$.  We may pick the annuli $A_i$ to be skinny enough that $A \subset N_{\epsilon}(\Gamma)$ and for every $y \in \Gamma$, $B_{\epsilon}(y)$ has nonempty intersection with $E^{\prime}$ and with $\Omega^{\prime}$.  (This is possible because $\Gamma$ is compact.) We may further assume that the annuli are skinny enough that $0 \in E^{\prime}$.  Let $P$ be one of the polynomials $P_{n,\delta}$ from the statement of Proposition \ref{p:topological}.  

Suppose $x \in \mathcal{K}(P)$; then $x \in E$ or $x \in A\setminus E$.  If $x \in A\setminus E$, then $d(x,\Gamma) < \epsilon$ since $A \subset N_{\epsilon}(\Gamma)$, so $d(x,E) < 2\epsilon$.  Hence $x \in N_{2\epsilon}(E)$.  Now suppose $x \in E$; then either $x \in E^{\prime}$ or $x \in A$.  If $x \in A$, then $d(x,\mathcal{K}(P)) \leq d(x,E^{\prime}) < 2\epsilon$.  Hence $x \in N_{2\epsilon}(\mathcal{K}(P))$.  Thus $d(E,\mathcal{K}(P)) < 2\epsilon$.  A similar argument shows $d(\Omega, \hat{\mathbb{C}} \setminus \mathcal{K}(P)) < 2\epsilon$. 

Now suppose $x \in \Gamma$.  Then $x \in A$, and $B_{\epsilon}(x)$ contains both a point $u \in \Omega^{\prime} \subset \hat{\mathbb{C}} \setminus \mathcal{K}(P)$ and a point $v \in E^{\prime} \subset \mathcal{K}(P)$.  The straight line path from $u$ to $v$ is in $B_{\epsilon}(x)$ and must contain a point of $\mathcal{J}(P)$.  Hence $x \in N_{2\epsilon}(\mathcal{J}(P))$. Now suppose $x \in \mathcal{J}(P)$.  Then $x \in A$, and $A \subset N_{\epsilon}(\Gamma)$.  Thus $d(\Gamma, \mathcal{J}(P)) < 2 \epsilon$.  
\end{proof}

\begin{proof}[Proof of Theorem \ref{t:main}]
Without loss of generality, we assume $\partial E$ consists of smooth Jordan curves.  If $0 \in E$, we may take 
$P$ to be any of the polynomials $P_{n,\delta}$ associated to $E$ in Proposition \ref{p:metricprop}.  If $0 \not \in E$, let $\widetilde{E}$ be a translated copy of $E$ that contains $0$.  We may then take $P$ to be the conjugation by the translation of any of the polynomials $P_{n,\delta}$ associated to $\widetilde{E}$ in Proposition \ref{p:metricprop}.

\end{proof}

\bibliographystyle{plain}
\nocite{*}
\bibliography{FurtherShapesBib}

\end{document}